\theoremstyle{plain}
\newtheorem{thm}{Theorem}[section]
\newtheorem{prop}[thm]{Proposition}
\newtheorem{lem}[thm]{Lemma}
\theoremstyle{definition}
\newtheorem{defn}{Definition}
\theoremstyle{remark}
\def\sF{{\mathcal F}}
\def\N{{\mathbb N}}
\def\sP{{\mathcal P}}
\def\R{{\mathbb R}}
\def\chix{{\raise.5ex\hbox{$\chi$}}}
\def\Z{{\mathbb Z}}
\begin{document}
\title{A measure-conjugacy invariant for free group actions}
\author{Lewis Bowen\footnote{email:lpbowen@math.hawaii.edu} \\ University of Hawaii}
\begin{abstract}
This paper introduces a new measure-conjugacy invariant for actions of free groups. Using this invariant, it is shown that two Bernoulli shifts over a finitely generated free group are measurably conjugate if and only if their base measures have the same entropy. This answers a question of Ornstein and Weiss.
\end{abstract}
\maketitle
\noindent
{\bf Keywords}: Ornstein's isomorphism theorem, Bernoulli shifts, measure conjugacy.\\
{\bf MSC}:37A35\\

\noindent

\section{Introduction}

This paper is motivated by an old and central problem in measurable dynamics: given two dynamical systems, determine whether they are measurably-conjugate, i.e., isomorphic. Let us set some notation.

A {\bf dynamical system} (or system for short) is a triple $(G,X,\mu)$ where $(X,\mu)$ is a probability space and $G$ is a group acting by measure-preserving transformations on $(X,\mu)$. We will also call this a {\bf dynamical system over $G$}, a {\bf $G$-system} or an {\bf action of $G$}. In this paper, $G$ will always be a discrete countable group. Two systems $(G,X,\mu)$ and $(G,Y,\nu)$ are {\bf isomorphic} (i.e., {\bf measurably conjugate}) if and only if there exist conull sets $X' \subset X, Y' \subset Y$ and a bijective measurable map $\phi:X'\to Y'$ such that $\phi^{-1}$ is measurable, $\phi_*\mu=\nu$ and $\phi(gx)=g\phi(x) \forall g\in G, x \in X'$. 

A special class of dynamical systems called Bernoulli systems or Bernoulli shifts has played a significant role in the development of the theory as a whole; it was the problem of trying to classify them that motivated Kolmogorov to introduce the mean entropy of a dynamical system over $\Z$ [Ko58, Ko59]. That is, Kolmogorov defined for every system $(\Z,X,\mu)$ a number $h(\Z,X,\mu)$ called the {\bf mean entropy} of $(\Z,X,\mu)$ that quantifies, in some sense, how ``random'' the system is. See [Ka07] for a historical survey.

Bernoulli shifts also play an important role in this paper, so let us define them. Let $(K,\kappa)$ be a standard Borel probability space. For a discrete countable group $G$, let $K^G = \prod_{g \in G} K$ be the set of all functions $x: G \to K$ with the product Borel structure and let $\kappa^G$ be the product measure on $K^G$. $G$ acts on $K^G$ by $(gx)(f)=x(g^{-1}f)$ for $x \in K^G$ and $g,f \in G$. This action is measure-preserving. The system $(G,K^G,\kappa^G)$ is the {\bf Bernoulli shift over $G$ with base $(K,\kappa)$}. It is nontrivial if $\kappa$ is not supported on a single point.

Before Kolmogorov's seminal work [Ko58, Ko59], it was unknown whether all nontrivial Bernoulli shifts over $\Z$ were measurably conjugate to each other. He proved that $h(\Z,K^\Z,\kappa^\Z)=H(\kappa)$ where $H(\kappa)$, the {\bf entropy of $\kappa$} is defined as follows. If there exists a finite or countably infinite set $K' \subset K$ such that $\kappa(K')=1$ then
$$H(\kappa)= - \sum_{k\in K'} \mu(\{k\}) \log( \mu(\{k\}) )$$
where we follow the convention $0\log(0)=0$. Otherwise, $H(\kappa)=+\infty$. Thus two Bernoulli shifts over $\Z$ with different base measure entropies cannot be measurably conjugate. 

The converse was proven by D. Ornstein in the groundbreaking papers [Or70a, Or70b]. That is, he proved that if two Bernoulli shifts $(\Z, K^\Z, \kappa^\Z), (\Z,L^\Z,\lambda^\Z)$ are such that $H(\kappa)=H(\lambda)$ then they are isomorphic.

 In [Ki75], Kieffer proved a generalization of the Shannon-McMillan theorem to actions of a countable amenable group $G$. In particular, he extended the definition of mean entropy from $\Z$-systems to $G$-systems. This leads to the generalization of Kolmogorov's theorem to amenable groups.

In the landmark paper [OW87], Ornstein and Weiss extended most of the classical entropy theory from $\Z$-systems to $G$-systems where $G$ is any countable amenable group. In particular, they proved that if two Bernoulli shifts $(G, K^G, \kappa^G)$, $(G,L^G,\lambda^G)$ over a countably infinite amenable group $G$ are such that $H(\kappa)=H(\lambda)$ then they are isomorphic. Thus Bernoulli shifts over $G$ are completely classified by base measure entropy.

Now let us say that a group $G$ is {\bf Ornstein} if $H(\kappa)=H(\lambda)$ implies $(G, K^G, \kappa^G)$ is isomorphic to $(G,L^G,\lambda^G)$ whenever $(K,\kappa)$ and $(L,\lambda)$ are standard Borel probability spaces. By the above, all countably infinite amenable groups are Ornstein. Stepin proved that any countable group that contains an Ornstein subgroup is itself Ornstein [St75]. It is unknown whether every countably infinite group is Ornstein.

In [OW87], Ornstein and Weiss asked whether all Bernoulli shifts over a nonamenable group are isomorphic. The next result shows that the answer is `no':
\begin{thm}\label{thm:free}
Let $G=\langle s_1,\ldots,s_r \rangle$ be the free group of rank $r$. If $(K_1,\kappa_1), (K_2,\kappa_2)$ are standard probability spaces with $H(\kappa_1)+H(\kappa_2)<\infty$ then $(G,K^G_1,\kappa_1^G)$ is measurably conjugate to $(G,K^G_2,\kappa_2^G)$ if and only if $H(\kappa_1)=H(\kappa_2)$.
\end{thm}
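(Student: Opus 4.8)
The plan is to treat the two implications separately, since they have completely different character. The ``if'' direction (equal entropy implies conjugacy) requires no new idea: the cyclic group $\Z$ embeds as a subgroup of $G=\langle s_1,\dots,s_r\rangle$, and $\Z$ is Ornstein by the Kolmogorov--Ornstein theorem. Since Stepin's result [St75] says that any countable group containing an Ornstein subgroup is itself Ornstein, $G$ is Ornstein, so $H(\kappa_1)=H(\kappa_2)$ already forces $(G,K_1^G,\kappa_1^G)\cong(G,K_2^G,\kappa_2^G)$. All the real work therefore goes into the ``only if'' direction, for which I would construct a genuine measure-conjugacy invariant that separates Bernoulli shifts of distinct base entropy.

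First I would attach to each finite measurable partition $\alpha$ of a $G$-system the quantity
\[
F(\alpha)=(1-2r)\,H(\alpha)+\sum_{i=1}^{r}H\big(\alpha\vee s_i\alpha\big),
\]
where $H$ denotes Shannon entropy and $s_i\alpha$ is the translate of $\alpha$ under $s_i$. The coefficient $1-2r$ is an Euler-characteristic-type weight matched to the free group: there is one ``vertex'' term $H(\alpha)$ and, for each of the $r$ generators, one ``edge'' term $H(\alpha\vee s_i\alpha)$. Writing $\alpha_n=\bigvee_{|g|\le n}g\alpha$ for the dynamical refinement of $\alpha$ by the ball of radius $n$, I would then prove that $F(\alpha_n)$ converges and that its limit $f(G,X,\mu)$ is independent of the choice of generating partition $\alpha$ (a partition whose $G$-translates generate the Borel $\sigma$-algebra mod $\mu$). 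Once this is established, $f$ is automatically a measure-conjugacy invariant, because an isomorphism carries a generating partition to a generating partition and preserves every entropy appearing in $F$.

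Next I would compute $f$ on a Bernoulli shift $(G,K^G,\kappa^G)$ of finite base entropy using the time-zero partition $\alpha$ (recording the value at the identity coordinate), which is generating. Independence of the coordinates gives $H(\alpha)=H(\kappa)$ and $H(\alpha\vee s_i\alpha)=2H(\kappa)$, so
\[
F(\alpha)=(1-2r)H(\kappa)+r\cdot 2H(\kappa)=H(\kappa).
\]
The same independence, combined with the combinatorics of overlapping balls in the Cayley tree of $G$ (the identity $|B_n|-\sum_i|B_n\cap s_iB_n|=1$), makes every refinement satisfy $F(\alpha_n)=H(\kappa)$, so $f(G,K^G,\kappa^G)=H(\kappa)$. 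Consequently, if the two shifts are measurably conjugate then $H(\kappa_1)=f(G,K_1^G,\kappa_1^G)=f(G,K_2^G,\kappa_2^G)=H(\kappa_2)$, which is exactly the ``only if'' direction. The hypothesis $H(\kappa_1)+H(\kappa_2)<\infty$ enters here: it keeps us in the finite-entropy regime where the time-zero partition is (approximable by a) finite generating partition and $f$ is defined.

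The main obstacle is proving that $f$ is well defined and invariant, i.e.\ that $\lim_n F(\alpha_n)$ exists and is the same for all generating partitions. Over $\Z$ or any amenable group the analogous independence of Kolmogorov--Sinai entropy from the generating partition rests on F{\o}lner sequences and the subadditivity behind the Shannon--McMillan--Breiman theorem; none of that machinery is available for the nonamenable group $G$. Instead I expect to compare two generating partitions $\alpha$ and $\beta$ by interpolating through their common refinement $\alpha\vee\beta$ and controlling how $F$ changes under a single refinement step, showing that this change vanishes in the limit. Establishing this stability---that $F(\alpha_n)$ is asymptotically insensitive to refining a generating partition---is the technical heart of the argument and is precisely what turns $f$ into a legitimate invariant rather than a partition-dependent number.
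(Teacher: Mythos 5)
Your overall architecture matches the paper's: Stepin's theorem handles the ``if'' direction (and is also used, in the paper, to reduce the ``only if'' direction to \emph{finite} base spaces $(L_i,\lambda_i)$ of equal entropy, since $f$ is only defined via finite partitions --- your parenthetical ``approximable by a finite generating partition'' glosses over this reduction), and your computation of $F(\alpha^n)=H(\kappa)$ on Bernoulli shifts via the tree identity $|B_n|-\sum_i|B_n\cap s_iB_n|=1$ is correct and equivalent to the paper's explicit ball-size calculation. The problem is that you have deferred the entire content of the theorem --- the well-definedness and conjugacy-invariance of $f$ --- to a closing sketch whose proposed mechanism would not work. You suggest comparing two generating partitions $\alpha$ and $\beta$ ``by interpolating through their common refinement $\alpha\vee\beta$ and controlling how $F$ changes under a single refinement step, showing that this change vanishes in the limit.'' But $F$ is not monotone, nor asymptotically stable, under arbitrary refinement: refining a generating partition can change $\inf_n F(\alpha^n)$ if done carelessly, and nothing in your outline explains why the change ``vanishes in the limit.'' This is precisely the point where the nonamenability bites, and it cannot be waved through.

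The paper's actual route is structurally different and requires three separate ingredients, none of which appears in your proposal: (i) $F$ decreases under \emph{splittings}, i.e.\ the restricted refinements $\alpha\mapsto\alpha\vee s\beta$ with $s\in S$ and $\beta$ a coarsening of $\alpha$ (Proposition \ref{prop:monotone}, which exploits the specific coefficient $1-2r$ via $H(\sigma|\alpha\vee s\alpha)\le H(\sigma|\alpha)$ and the vanishing of the $s=t$ term); (ii) any two \emph{combinatorially equivalent} generating partitions admit a common ball-splitting $\alpha^{m+l}$ (Proposition \ref{prop:ball splittings}), which gives $f(\alpha)=f(\beta)$ only within a combinatorial equivalence class; and (iii) the equivalence class of a generating partition is dense in $\sP_{gen}$ in the Rokhlin metric (Theorem \ref{thm:dense}), which combined with the upper semicontinuity of $\alpha\mapsto\inf_n F(\alpha^n)$ bridges between distinct equivalence classes. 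Without supplying (i)--(iii) or a genuine substitute, your argument establishes the Bernoulli computation but not that $f$ is an invariant, so the ``only if'' direction remains unproved.
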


The reason Ornstein and Weiss thought the answer might be `yes' is due to a curious example presented in [OW87]. It pertains to a well-known fundamental property of entropy: it is nonincreasing under factor maps. Let $(G,X,\mu)$ and $(G,Y,\nu)$ be two systems. A map $\phi:X \to Y$ is a factor if $\phi_*\mu=\nu$ and $\phi(gx)=g\phi(x)$ for a.e. $x\in X$ and every $g\in G$. If $G$ is amenable then the mean entropy of a factor is less than or equal to the mean entropy of the source. This is essentially due to Sinai. So if $K_n=\{1,\dots,n\}$ and $\kappa_n$ is the uniform probability measure on $K_n$ then $(G,K_2^G,\kappa_2^G)$, which has entropy $\log(2)$, cannot factor onto $(G,K_4^G,\kappa_4^G)$, which has entropy $\log(4)$.

The argument above fails if $G$ is nonamenable. Indeed, let $G=\langle a,b \rangle$ be a rank 2 free group. Identify $K_2$ with the group $\Z/2\Z$ and $K_4$ with $\Z/2\Z \times \Z/2\Z$. Then 
$$\phi(x)(g) := \big(x(g)+x(ga), x(g) + x(gb) \big) ~\forall x \in K_2^G, g\in G$$
is a factor map from $(G,K_2^G,\kappa_2^G)$ onto $(G,K_4^G,\kappa_4^G)$. This is Ornstein-Weiss' example. It is the main ingredient in the proof of the next theorem, which will appear in a separate paper.
\begin{thm}
Let $G$ be any countable group that contains a nonabelian free subgroup. Then every nontrivial Bernoulli shift over $G$ factors onto every Bernoulli shift over $G$. 
\end{thm}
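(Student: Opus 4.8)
The plan is to reduce everything to the rank-two free group $F=\langle a,b\rangle$ and then to a single ``entropy--inflating'' factor. First I would use \emph{co-induction}. If $H\le G$ and $(H,Y,\nu)$ factors onto $(H,Z,\eta)$, then the co-induced $G$-systems $\mathrm{CInd}_H^G(Y,\nu)$ and $\mathrm{CInd}_H^G(Z,\eta)$ inherit a $G$-factor map; moreover co-induction sends the Bernoulli $H$-shift with base $(K,\kappa)$ to the Bernoulli $G$-shift with the \emph{same} base $(K,\kappa)$ (a transversal for $H\backslash G$ identifies $K^G$ with $(K^H)^{H\backslash G}$ equivariantly, and the product measure with the product measure). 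Since a nonabelian free subgroup of $G$ contains a copy of $F$, both the given source $(G,K^G,\kappa^G)$ and target $(G,L^G,\lambda^G)$ are the co-inductions of the Bernoulli $F$-shifts with bases $(K,\kappa)$ and $(L,\lambda)$. Hence it suffices to produce an $F$-factor map between these $F$-shifts and then co-induce it back up to $G$.

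Next I would remove the dependence on the specific target. A coordinate map $p\colon(L_0,\lambda_0)\to(L,\lambda)$ with $p_*\lambda_0=\lambda$ induces an $F$-equivariant factor $p^F\colon L_0^F\to L^F$ with $(p^F)_*\lambda_0^F=\lambda^F$. If $(L_0,\lambda_0)$ is a non-atomic standard probability space it is isomorphic to $([0,1],\mathrm{Leb})$, and every standard probability space is a pushforward of Lebesgue measure; hence a Bernoulli shift with non-atomic base factors onto \emph{every} Bernoulli shift over $F$. Thus the entire theorem reduces to the single claim: every nontrivial Bernoulli shift over $F$ factors onto some Bernoulli shift whose base is non-atomic.

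The engine for this is an algebraic generalization of the Ornstein--Weiss example. For a finite abelian group $A$ the difference map
$$\Phi(x)(g)=\big(x(g)-x(ga),\,x(g)-x(gb)\big)$$
is an $F$-equivariant continuous homomorphism $A^F\to(A\times A)^F$; it is surjective because the Cayley graph of $F$ is a tree, so the edge equations $x(g)-x(gs)=y_s(g)$ are solved by propagating values outward from a root with no consistency obstruction. A surjective homomorphism of compact groups pushes Haar to Haar, so this exhibits the uniform-base shift on $A$ as factoring onto the uniform-base shift on $A\times A$, doubling entropy. To reach a non-atomic base in one stroke I would take $A=\Z/2\Z$ and use Pontryagin duality: an $F$-equivariant continuous surjection $(\Z/2\Z)^F\to((\Z/2\Z)^{\N})^F$ is dual to an injective $\mathbb{F}_2[F]$-module map $\bigoplus_{\N}\mathbb{F}_2[F]\hookrightarrow\mathbb{F}_2[F]$, and such a map exists because $\mathbb{F}_2[F]$ is a domain that is \emph{not} Ore (free groups are orderable, giving a domain; non-amenability produces a free subsemigroup, hence a rank-two and so countable-rank free submodule). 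The resulting factor carries the uniform $2$-point shift onto the Bernoulli shift with base $((\Z/2\Z)^{\N},\mathrm{Haar})$, which is non-atomic.

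This settles every case in which the source base admits a coordinate map onto a uniform measure---in particular whenever $\kappa$ has a non-atomic part or its atoms split into two halves. The main obstacle is the complementary case, where $\kappa$ is purely atomic with a dominant atom (mass $>1/2$), so \emph{no} coordinate map reaches a uniform base and entropy must be increased strictly by a map that is not a homomorphism. Here I would exploit non-amenability directly: the exponential branching of the tree furnishes, in a ball of radius $n$ about each vertex, roughly $3^{n}$ independent $(q,1-q)$-labels, enough raw entropy to extract fresh unbiased bits by a von Neumann--type scheme, while the tree geometry leaves room to arrange these extractions equivariantly and asymptotically independently across vertices, yielding a factor onto a non-atomic-base shift. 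Establishing this independence rigorously---manufacturing genuinely i.i.d.\ output from a biased source equivariantly, which is impossible over $\Z$ by the entropy inequality---is the crux of the argument; once it is in place the reductions above finish the proof, and Ornstein's isomorphism theorem over free groups (Theorem~\ref{thm:free}) is not even needed, though it could be used to shortcut the hitting of prescribed finite-entropy targets.
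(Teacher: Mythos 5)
First, a caveat: the paper does not prove this theorem. It exhibits the Ornstein--Weiss map $\phi(x)(g)=(x(g)+x(ga),x(g)+x(gb))$, calls it ``the main ingredient,'' and defers the actual proof to a separate paper, so there is no in-paper argument to compare against and your proposal must be judged on its own. Your soft reductions are correct and well chosen: co-induction from a copy of $F=\langle a,b\rangle$ inside $G$ does carry Bernoulli shifts to Bernoulli shifts with the same base and carries $F$-factor maps to $G$-factor maps; coordinate-wise pushforward reduces the target to a single Bernoulli shift with non-atomic base; and the Pontryagin-duality upgrade of the Ornstein--Weiss map (dualizing an embedding $\bigoplus_{\mathbb{N}}\mathbb{F}_2[F]\hookrightarrow\mathbb{F}_2[F]$, which exists because the augmentation ideal of $\mathbb{F}_2[F]$ is a free module of rank $2$, so one can telescope) is a genuinely clever way to show that the \emph{uniform} two-point Bernoulli shift over $F$ factors onto a shift with non-atomic base, hence onto everything.

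The gap is in the source, not the target. The theorem concerns \emph{every} nontrivial base $(K,\kappa)$, and your machinery only reaches those $\kappa$ that admit a measurable partition into $n$ pieces of mass $1/n$ for some $n\ge 2$ (and note the exceptional set is larger than you state: $\kappa=(0.4,0.4,0.2)$ has no dominant atom yet admits no equal-mass partition into any number of pieces). For a base such as $(0.9,0.1)$, no composition of coordinate maps with equivariant group homomorphisms can ever produce unbiased output, and this is precisely where the entire content of the theorem lives: one must extract i.i.d.\ uniform bits from a biased i.i.d.\ source \emph{equivariantly}, something the entropy inequality forbids over amenable groups. Your treatment of this case --- ``a von Neumann-type scheme \ldots arrange these extractions equivariantly and asymptotically independently across vertices'' --- is a heuristic, not an argument: von Neumann extraction performed in the radius-$n$ ball around each vertex produces outputs at nearby vertices that read overlapping input coordinates and are therefore correlated, and nothing in the sketch controls, let alone eliminates, these correlations or explains how ``asymptotically independent'' becomes ``exactly Bernoulli.'' Since you yourself identify this as the crux and then assert it rather than prove it, the proposal does not establish the theorem; it reduces it (correctly and usefully) to its one hard case.
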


To prove theorem \ref{thm:free}, the following invariant is introduced. Let $(X,\mu)$ be any probability space on which $G=\langle s_1,\ldots,s_r\rangle$, the rank $r$ free group, acts by measure-preserving transformations. Let $\alpha=\{A_1,\ldots,A_n\}$ be a partition of $X$ into finitely many measurable sets. Let $B(e,n) \subset G$ denote the ball of radius $n$ centered at the identity element with respect to the word metric induced by $S=\{s_1^{\pm 1}, \ldots, s_r^{\pm 1}\}$. The join of two partitions $\alpha, \beta$ of $X$ is defined by $\alpha \vee \beta = \{A \cap B ~|~ A \in \alpha, B \in \beta\}$. Let 
\begin{eqnarray*}
H(\alpha) &:=& -\sum_{A \in \alpha} \mu(A)\log(\mu(A)),\\ 
F(\alpha) &:=& (1-2r)H(\alpha)+ \sum_{i =1}^r H(\alpha \vee s_i\alpha),\\
\alpha^n &:=& \bigvee_{g\in B(e,n)} g\alpha,\\
f(\alpha) &:=& \inf_{n} F(\alpha^n).
\end{eqnarray*}
 A partition $\alpha$ is {\bf generating} if the smallest $G$-invariant $\sigma$-algebra containing $\alpha$ is the $\sigma$-algebra of all measurable sets (up to sets of measure zero). The main theorem of this paper is:
\begin{thm}\label{thm:main}
Let $G=\langle s_1,\ldots,s_r\rangle$. Let $(G,X,\mu)$ be a system. If $\alpha$ and $\beta$ are finite measurable generating partitions of $X$ then $f(\alpha)=f(\beta)$. Hence this number, denoted $f(G,X,\mu)$, is a measure-conjugacy invariant.
\end{thm}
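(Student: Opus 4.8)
The plan is to reduce the statement to a single comparison and then prove invariance of the functional $F$ under ``local recoding'' of a generating partition. Since $\alpha\vee\beta$ refines both $\alpha$ and $\beta$ and is again generating, it suffices to prove that $f(\alpha)=f(\alpha\vee\beta)$ whenever $\alpha$ is generating and $\beta$ is an arbitrary finite partition. Applying this once with generating partition $\alpha$ and once with generating partition $\beta$ (both are generating, and $\alpha\vee\beta=\beta\vee\alpha$) gives $f(\alpha)=f(\alpha\vee\beta)=f(\beta)$.

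\textbf{Monotonicity.} The next step is to show that the sequence $F(\alpha^n)$ is non-increasing, so that $f(\alpha)=\lim_n F(\alpha^n)$ and, since $(\alpha^m)^n=\alpha^{n+m}$, that $f(\alpha^m)=f(\alpha)$ for every $m$. I would prove this by rewriting $F$ in the form $F(\alpha)=(1-r)H(\alpha)+\sum_i H(s_i\alpha\mid\alpha)$ and expressing the difference $F(\alpha^n)-F(\alpha^{n+1})$ as a combination of conditional entropies of the radius-$(n{+}1)$ sphere given the radius-$n$ ball. Here the tree structure of the Cayley graph is essential: distinct branches emanating from the sphere are conditionally independent given the ball, so that ``conditioning reduces entropy'' can be applied branch by branch to fix the sign of the net change. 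A useful sanity check is the Bernoulli case, where the translates $g\alpha$ are independent and $F(\alpha^n)=c_n H(\alpha)$ with $c_n=|B(e,n)|-r\,|B(e,n)\cap s_1B(e,n)|\equiv 1$; thus $F(\alpha^n)\equiv H(\alpha)$, the boundary contributions cancel \emph{exactly}, and one already recovers the value $H(\kappa)$ demanded by Theorem~\ref{thm:free}.

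\textbf{Core invariance.} It remains to prove $f(\alpha)=f(\alpha\vee\beta)$. I would first treat the case $\beta\le\alpha^m$, so that $\alpha\le\alpha\vee\beta\le\alpha^m$, and show that the functionals $\gamma\mapsto F(\gamma^n)$ give $(\alpha\vee\beta)^n$ and $\alpha^n$ the same limit. This is where the precise coefficients of $F$ do the work: because the graph is a tree, $(\alpha\vee\beta)^n$ and $\alpha^{n+m}$ differ only by data supported near the boundary sphere, and the combination $(1-2r)H(\cdot)+\sum_i H(\,\cdot\vee s_i\,\cdot\,)$ is arranged—exactly as in the Bernoulli computation $c_n\equiv1$—so that these boundary entropies cancel rather than merely becoming lower order. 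For general $\beta$, since $\alpha$ generates I can choose $\beta_m\le\alpha^m$ with $H(\beta\mid\beta_m)\to0$; a quantitative continuity estimate of the shape $|f(\alpha\vee\beta)-f(\alpha\vee\beta_m)|\le C\,H(\beta\mid\alpha^m)$, combined with the upper semicontinuity of $f$ (an infimum of the continuous functionals $\gamma\mapsto F(\gamma^n)$), upgrades the special case to the general one.

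\textbf{Main obstacle.} The genuine difficulty is the core invariance, and specifically the fact that in a $2r$-regular tree the boundary sphere of a ball is comparable in size to the whole ball. Consequently one cannot discard boundary terms as negligible, as one would for an amenable group; the equality $f(\alpha)=f(\alpha\vee\beta)$ must instead come from an \emph{exact} algebraic cancellation built into the coefficient $(1-2r)$ and the edge-marginal terms $H(\alpha\vee s_i\alpha)$ (an Euler-characteristic-type phenomenon), together with a uniform-in-$n$ continuity bound controlling the passage from finite radius to an arbitrary generating partition. Establishing this cancellation rigorously, rather than the monotonicity or the reduction, is the crux of the argument.
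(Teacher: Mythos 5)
Your reduction to $f(\alpha)=f(\alpha\vee\beta)$ is sound, and your Euler-characteristic intuition for why the coefficient $1-2r$ produces exact rather than merely asymptotic cancellation is the right one (it is visible in the paper's proof of Proposition \ref{prop:monotone}, where $2r-1$ copies of $H(\sigma|\alpha)$ cancel term-by-term against the terms $H(\sigma|\alpha\vee s\alpha)$). But the proposal has two genuine gaps. First, the core invariance in the case $\beta\le\alpha^m$ is asserted, not proved: you say the boundary entropies ``cancel'' but give no mechanism, and you yourself flag this as the crux. The paper's mechanism is a splitting calculus: a simple splitting $\alpha\mapsto\alpha\vee s\gamma$ (with $\gamma$ a coarsening of $\alpha$) decreases $F$, by a short computation using only that conditioning reduces entropy; and any two combinatorially equivalent partitions admit, for suitable $n$, the ball-join $\alpha^n$ as a common splitting (Lemma \ref{lem:splittings} and Proposition \ref{prop:ball splittings}). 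These two facts give $f(\alpha)=f(\alpha\vee\beta)$ whenever $\beta\le\alpha^m$ without any bookkeeping of spheres. Note also that your justification of monotonicity via ``distinct branches are conditionally independent given the ball'' assumes a Markov property that fails for general actions; only subadditivity is actually needed there, so that step is repairable, but as written the justification is wrong.

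Second, and more seriously, the passage from $\beta_m\le\alpha^m$ to general $\beta$ rests on a two-sided bound $|f(\alpha\vee\beta)-f(\alpha\vee\beta_m)|\le C\,H(\beta|\alpha^m)$. No such uniform bound is available from the definition: $f$ is an infimum of the functionals $\gamma\mapsto F(\gamma^n)$, whose Lipschitz constants in the Rokhlin metric grow like $|B(e,n)|$, i.e., exponentially in $n$, so $f$ is only upper semicontinuous. Upper semicontinuity gives you exactly one of the two inequalities you need, namely $f(\alpha)\le f(\alpha\vee\beta)$; the reverse inequality would require lower semicontinuity along your approximating sequence, which you have not established and which does not follow from the infimum structure. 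This is precisely where the paper does something structurally different: it proves a density theorem (Theorem \ref{thm:dense}) showing that the combinatorial equivalence class of any generating partition is dense in $\sP_{gen}$, and then applies upper semicontinuity \emph{symmetrically} --- approximating $\alpha$ by partitions combinatorially equivalent to $\beta$ to get $f(\beta)\le f(\alpha)$, and vice versa --- so that lower semicontinuity is never needed. Without either a proof of your uniform continuity estimate (which I doubt holds in the stated generality) or a symmetrization of this kind, the argument does not close.
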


Theorem \ref{thm:bernoulli} below implies that if $|K|<\infty$ then $f(G,K^G,\kappa^G)=H(\kappa)$. This and Stepin's theorem proves theorem \ref{thm:free}. A simple exercise reveals that if $r=1$, then $f(G,X,\mu)=h(G,X,\mu)$ is Kolmogorov's entropy.

Here is a brief outline of the paper. In the next section, standard entropy-theory definitions are presented. In \S \ref{sec:topological}, an equivalence relation, called combinatorial equivalence, is introduced on the space of finite partitions of $X$, where $(X,\mu)$ is a standard probability space on which a countable group $G$ acts. We prove that the combinatorial equivalence class of a finite generating partition is dense in the space of all generating partitions. In \S \ref{sec:splittings}, we introduce an operation on partitions called splitting and show that any two combinatorially equivalent partitions have a common splitting. This culminates in a condition sufficient for a function from the space of partitions to $\R$ to induce a measure-conjugacy invariant. In \S \ref{sec:f}, this condition is shown to hold for the function $F$ defined above. This proves theorem \ref{thm:main}. Then we prove theorem \ref{thm:bernoulli} (that $f(G,K^G,\kappa^G)=H(\kappa)$ if $|K|<\infty$) and conclude theorem \ref{thm:free}.

{\bf Acknowledgments}: I am grateful to Russell Lyons for suggesting this problem, for encouragements and for many helpful conversations. I would like to thank Dan Rudolph for pointing out errors in a preliminary version. I would like to thank Benjy Weiss for many comments that helped me to greatly improve the exposition of this paper and simplify several arguments.

\section{Some Standard Definitions}\label{sec:standard}


For the rest of this section, fix a standard probability space $(X,\mu)$. 
\begin{defn}
A {\bf partition} $\alpha=\{A_1,\ldots,A_n\}$ is a pairwise disjoint collection of measurable subsets $A_i$ of $X$ such that $\cup_{i=1}^n A_i = X$. The sets $A_i$ are called the {\bf partition elements} of $\alpha$. Alternatively, they are called the {\bf atoms} of $\alpha$. Unless stated otherwise, all partitions in this paper are finite (i.e., $n<\infty$).

If $\alpha$ and $\beta$ are partitions of $X$ then we write $\alpha=\beta$ a.e. if for all $A\in \alpha$ there exists $B\in\beta$ with $\mu(A \Delta B)=0$. Let $\sP$ denote the set of all a.e.-equivalence classes of finite partitions of $X$. By a standard abuse of notation, we will refer to elements of $\sP$ as partitions.


\end{defn}

\begin{defn}
If $\alpha, \beta \in \sP$ then the {\bf join}  of $\alpha$ and $\beta$ is the partition $\alpha \vee \beta=\{A \cap B\,|\,A \in \alpha, ~B\in \beta\} $.
\end{defn}



\begin{defn}
Let $\sF$ be a $\sigma$-algebra contained in the $\sigma$-algebra of all measurable subsets of $X$. Given a partition $\alpha$, define the {\bf conditional information function} $I(\alpha|\sF):X \to \R$ by
$$I(\alpha|\sF)(x) = -\log\big(\mu(A_x|\sF)(x)\big)$$
where $A_x$ is the atom of $\alpha$ containing $x$. Here $\mu(A_x|\sF):X \to \R$ is the conditional expectation of $\chi_{A_x}$, the characteristic function of $A_x$, with respect to the $\sigma$-algebra $\sF$.
 
The {\bf conditional entropy of $\alpha$ with respect to $\sF$} is defined by
$$H(\alpha|\sF) = \int_X I(\alpha | \sF)(x) \, d\mu(x).$$

If $\beta$ is a partition then, by abuse of notation, we can identify $\beta$ with the $\sigma$-algebra equal to the set of all unions of partition elements of $\beta$. Through this identification, $I(\alpha|\beta)$ and $H(\alpha|\beta)$ are well-defined. Let $I(\alpha)=I(\alpha|\{\emptyset,X\})$ and $H(\alpha)=H(\alpha|\{\emptyset,X\})$.

\end{defn}

\begin{lem}\label{lem:relative}
For any two partitions $\alpha, \beta$ and for any two $\sigma$-algebras $\sF_1, \sF_2$ with $\sF_1 \subset \sF_2$,
$$H(\alpha \vee \beta) = H(\alpha) + H(\beta|\alpha),$$
$$H(\alpha  | \sF_2) \le H(\alpha  | \sF_1).$$
\end{lem}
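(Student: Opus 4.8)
The plan is to treat the two assertions separately. The first is the standard chain rule for entropy, and the second is the monotonicity of conditional entropy as the conditioning $\sigma$-algebra grows; both follow from elementary properties of conditional expectation together with the concavity of $t \mapsto -t\log t$.

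For the first identity, I would work pointwise with the information function. Since $\alpha$ is a finite partition, for $x$ in an atom $A \in \alpha$ of positive measure the conditional probability $\mu(B \mid \alpha)(x)$ is constant on $A$ and equals $\mu(A \cap B)/\mu(A)$ for each $B \in \beta$. Hence
$$H(\beta \mid \alpha) = -\sum_{A \in \alpha}\sum_{B\in\beta} \mu(A\cap B)\log\frac{\mu(A\cap B)}{\mu(A)}.$$
Expanding the logarithm and using $\sum_{B\in\beta}\mu(A\cap B)=\mu(A)$ to cancel the $\log\mu(A)$ terms against $H(\alpha) = -\sum_A \mu(A)\log\mu(A)$, the right-hand side collapses to $-\sum_{A,B}\mu(A\cap B)\log\mu(A\cap B) = H(\alpha\vee\beta)$, giving $H(\alpha\vee\beta)=H(\alpha)+H(\beta\mid\alpha)$. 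Atoms of measure zero contribute nothing under the convention $0\log 0 = 0$.

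For the second identity, the key is to rewrite the conditional entropy in a form to which Jensen's inequality applies. Let $\phi(t) = -t\log t$, a concave function on $[0,1]$ with $\phi(0)=0$. Using the defining property of conditional expectation — that $\int_A g\,d\mu = \int_X \mu(A\mid\sF)\,g\,d\mu$ for any bounded $\sF$-measurable $g$ — applied to $g = -\log\mu(A\mid\sF)$, one obtains
$$H(\alpha\mid\sF) = \sum_{A\in\alpha}\int_X \phi\big(\mu(A\mid\sF)\big)\,d\mu.$$
Now fix $A$ and combine the tower property $\mu(A\mid\sF_1) = \E\!\big[\mu(A\mid\sF_2)\mid\sF_1\big]$ with the conditional Jensen inequality for the concave $\phi$, yielding $\E[\phi(\mu(A\mid\sF_2))\mid\sF_1]\le\phi(\mu(A\mid\sF_1))$ almost everywhere. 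Integrating over $X$ and summing over $A\in\alpha$ gives $H(\alpha\mid\sF_2)\le H(\alpha\mid\sF_1)$.

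The main obstacle is the second part: one must verify that the reformulation of $H(\alpha\mid\sF)$ as an integral of $\phi(\mu(A\mid\sF))$ is valid (this is where the conditional expectation property is used, and where the convention $0\log 0=0$ must be handled on the set $\{\mu(A\mid\sF)=0\}$), and that the conditional version of Jensen's inequality may legitimately be invoked for the concave $\phi$ on the bounded random variables $\mu(A\mid\sF_2)$. The first identity is routine by comparison.
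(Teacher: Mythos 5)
Your proof is correct. The paper itself offers no argument for this lemma---it simply cites [Gl03, Proposition 14.16]---and what you have written is precisely the standard proof that such a citation points to: the chain rule by expanding $\log\frac{\mu(A\cap B)}{\mu(A)}$, and monotonicity via the identity $H(\alpha\mid\sF)=\sum_{A\in\alpha}\int_X \phi(\mu(A\mid\sF))\,d\mu$ with $\phi(t)=-t\log t$, followed by the tower property and conditional Jensen. The integrability point you flag is harmless since $\phi$ is bounded on $[0,1]$ and the defining identity for conditional expectation extends to nonnegative unbounded $g$ by monotone convergence.
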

\begin{proof}
This is well-known. For example, see [Gl03, Proposition 14.16, page 255]. 
\end{proof}




\begin{defn}[Rokhlin distance]
Define $d:\sP \times \sP \to \R$ by
$$d(\alpha,\beta) = H(\alpha|\beta) + H(\beta|\alpha) = 2H(\alpha \vee \beta) - H(\alpha) - H(\beta).$$
By [Pa69, theorem 5.22, page 62] this defines a distance function on $\sP$. If $G$ is a group acting by measure-preserving transformations on $(X,\mu)$ then the action of $G$ on $\sP$ is isometric. I.e., if $g \in G$, $\alpha, \beta \in \sP$ then $d(g\alpha, g\beta) = d(\alpha,\beta)$. From now on, we consider $\sP$ with the topology induced by $d(\cdot,\cdot)$.
\end{defn}



\begin{defn}
Let $G$ be a group acting on $(X,\mu)$. Let $\alpha$ be a partition of $X$. Let $\Sigma_\alpha$ be the smallest $G$-invariant $\sigma$-algebra containing $\alpha$. Then $\alpha$ is {\bf generating} if for any measurable set $A \subset X$ there exists a set $A' \in \Sigma_\alpha$ such that $\mu(A \Delta A')=0$. Let $\sP_{gen} \subset \sP$ denote the set of all generating partitions.
\end{defn}

\section{Combinatorially Equivalent Partitions}\label{sec:topological}

For this section, fix a countable group $G$ and an action of $G$ on a standard probability space $(X,\mu)$ by measure-preserving transformations.

\begin{defn}
Given $\alpha \in \sP$ and $F \subset G$ finite, let $\alpha^F = \bigvee_{f \in F} f\alpha$. 
\end{defn}

\begin{defn}
If $\alpha, \beta \in \sP$ are such that for all $A \in \alpha$ there exists $B \in \beta$ with $\mu(A-B)=0$ then we say that $\alpha$ {\bf refines $\beta$} and denote it by $\alpha \ge \beta$. Equivalently, $\beta$ is a {\bf coarsening} of $\alpha$.
\end{defn}

\begin{defn}\label{def:top}
Let $\alpha, \beta \in \sP$. We say that $\alpha$ is {\bf combinatorially equivalent} to $\beta$ if there exist finite sets $L,M \subset G$ such that $\alpha \le \beta^L$ and $\beta \le \alpha^M$. Let $\sP_{eq}(\alpha) \subset \sP$ denote the set of partitions combinatorially equivalent to $\alpha$
\end{defn}


 
 The goal of this section is to prove theorem \ref{thm:dense} below: if $\alpha$ is a generating partition then $\sP_{eq}(\alpha)$ is dense in the subspace of all generating partitions.




\begin{lem}\label{lem:approx1}
Let $\alpha$ be a generating partition and $\beta=\{B_1,\ldots,B_m\} \in \sP$. Let $\epsilon>0$. Then there exists a partition $\beta'=\{B'_1,\ldots,B'_m\}$ and a finite set $L \subset G$ such that $\alpha^L \ge \beta'$ and for all $i=1\ldots m$, $\mu(B_i \Delta B'_i) \le \epsilon$.  
\end{lem}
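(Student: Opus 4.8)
The plan is to exploit the generating hypothesis to approximate each atom $B_i$ in measure by a set that is a finite union of atoms of $\alpha^F$ for some finite $F \subset G$, and then to repair the resulting collection into a genuine partition without spoiling the approximation. First I would record the relevant density statement. Let $\sA$ denote the collection of all finite unions of atoms of $\alpha^F$, as $F$ ranges over the finite subsets of $G$. Since $\alpha^F \le \alpha^{F'}$ whenever $F \subseteq F'$, the family $\sA$ is an algebra of sets, and the $\sigma$-algebra it generates is exactly $\Sigma_\alpha$, the smallest $G$-invariant $\sigma$-algebra containing $\alpha$. Because $\alpha$ is generating, $\Sigma_\alpha$ coincides with the full measurable $\sigma$-algebra up to null sets, so the standard approximation theorem for $\sigma$-algebras (a consequence of the monotone class theorem) guarantees that for every measurable $A$ and every $\delta>0$ there is a set $C \in \sA$ with $\mu(A \Delta C) < \delta$.

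Fixing $\delta$ to be chosen below, I would apply this to each $B_i$ to obtain $C_i \in \sA$ with $\mu(B_i \Delta C_i) < \delta$, where $C_i$ is a union of atoms of some $\alpha^{F_i}$. Setting $L = \bigcup_{i=1}^m F_i$, a finite set, every $C_i$ is then a union of atoms of $\alpha^L$. The $C_i$ need neither be disjoint nor cover $X$, so next I would disjointify them: put $B'_1 = C_1 \cup \big(X \setminus \bigcup_j C_j\big)$ and $B'_i = C_i \setminus \bigcup_{j<i} C_j$ for $i \ge 2$. Each $B'_i$ is again $\alpha^L$-measurable, the $B'_i$ are pairwise disjoint, and they cover $X$, so $\beta' = \{B'_1,\ldots,B'_m\}$ is a partition with $\alpha^L \ge \beta'$.

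It then remains to bound $\mu(B_i \Delta B'_i)$. Using that the $B_i$ are pairwise disjoint one gets $\mu(C_i \cap C_j) \le \mu(C_i \setminus B_i) + \mu(C_j \setminus B_j) < 2\delta$ for $i \ne j$, and using that the $B_i$ cover $X$ one gets $\mu\big(X \setminus \bigcup_j C_j\big) \le \sum_j \mu(B_j \setminus C_j) < m\delta$. Since $B'_i$ differs from $C_i$ only by the overlaps removed in the disjointification and (for $i=1$) by the added leftover, these two estimates give $\mu(C_i \Delta B'_i) < 2(m-1)\delta$ for $i\ge 2$ and $\mu(C_1 \Delta B'_1) < m\delta$, whence $\mu(B_i \Delta B'_i) \le \mu(B_i \Delta C_i) + \mu(C_i \Delta B'_i) < 2m\,\delta$ for every $i$. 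Choosing $\delta = \epsilon/(2m)$ at the outset yields $\mu(B_i \Delta B'_i) \le \epsilon$, as required.

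The only genuinely nontrivial point is the density statement of the first step, namely converting the \emph{generating} hypothesis — a statement about the generated $\sigma$-algebra — into in-measure approximability by members of the concrete algebra $\sA$. Everything after that (the choice of $L$, the disjointification, and the triangle-inequality estimates) is routine bookkeeping, and the only care needed is to fix $\delta$ in terms of $\epsilon$ and $m$ \emph{before} selecting the sets $C_i$.
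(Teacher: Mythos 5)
Your proposal is correct and follows essentially the same route as the paper: use the generating hypothesis to approximate each $B_i$ in measure by a finite union of atoms of $\alpha^L$ for a single finite $L$, then disjointify these approximants into a genuine $\alpha^L$-measurable partition and control the damage by a triangle-inequality bookkeeping of order $m\delta$. The only differences are cosmetic (which atom absorbs the leftover mass in the disjointification, and the explicit appeal to the monotone-class/approximation theorem, which the paper leaves implicit).
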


\begin{proof}
Since $\alpha$ is generating, there exists a finite set $L \subset G$ such that for every $i\in\{1,\ldots,m\}$, there is a set $B''_i$, equal to a finite union of atoms of $\alpha^L$, such that $\mu(B_i \Delta B''_i) < \frac{\epsilon}{m}$. For $i=1\ldots m-1$, let 
$$B'_i := B''_i - \bigcup_{j\ne i} B''_j.$$ 
$$B'_m := X - \bigcup_{i < m} B'_i = B''_m \cup \bigcup_{i\ne j} B''_i \cap B''_j.$$
Observe that for all $i=1\ldots m$,
$$B_i - \bigcup_j B''_j \Delta B_j \subset B'_i \subset  B_i \cup \bigcup_j B''_j \Delta B_j.$$
Thus
$$\mu(B'_i \Delta B_i) \le m\Big(\frac{\epsilon}{m}\Big) = \epsilon.$$
By construction, $\beta'=\{B'_1,\ldots,B'_m\} \le \alpha^L$.





\end{proof}




\begin{lem}
Let $\alpha=\{A_1,\ldots,A_n\} \in \sP$ and $\beta \in \sP_{gen}$. Let $\epsilon>0$. Then there exists a finite set $M\subset G$ such that for all finite $L \subset G$ with $M \subset L$, the partition elements $\{B^L_1,\ldots,B^L_{m_L}\}$ of $\beta^L$ can be ordered so that there exists an $r\in \{1,\ldots,m_L\}$ and a function $f:\{1,2,\ldots r\} \to \{1,2,\ldots,n\}$ so that for all $i \in \{1,\ldots,r\}$,
$$\frac{\mu(B^L_i \cap A_{f(i)})}{\mu(B^L_i)} \ge 1-\epsilon$$
and
$$\mu\Big( \bigcup_{i > r} B^L_i\Big) < \epsilon.$$
\end{lem}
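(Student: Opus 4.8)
The plan is to control, for each atom $B$ of $\beta^L$, the conditional distribution of $\alpha$ given $B$, and to show that for large $L$ the total mass of the atoms $B$ on which this distribution fails to concentrate on a single atom of $\alpha$ is small. Write $\phi_j^L = \mu(A_j \mid \beta^L)$ for the conditional expectation of $\chi_{A_j}$; it is constant on each atom $B$ of $\beta^L$, equal to $p_j(B) := \mu(B \cap A_j)/\mu(B)$. Call an atom $B$ \emph{good} if $\max_j p_j(B) \ge 1 - \epsilon$ and \emph{bad} otherwise, and let $D_L$ be the union of the bad atoms. For a good atom $B_i$ I set $f(i)$ to be an index $j$ achieving the maximum, so that $\mu(B_i \cap A_{f(i)})/\mu(B_i) \ge 1 - \epsilon$; ordering the good atoms first then supplies the required $r$ and $f$, with $\bigcup_{i>r}B^L_i = D_L$. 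Everything thus reduces to showing $\mu(D_L) < \epsilon$ for all sufficiently large $L$.

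To estimate $\mu(D_L)$, consider $\Phi^L := \sum_j \chi_{A_j}\phi_j^L$, the conditional probability of the ``correct'' atom of $\alpha$; its value at $x \in B$ is $p_{j}(B)$ where $A_j \ni x$, hence is at most $\max_j p_j(B)$. On a bad atom this maximum is $< 1-\epsilon$, so $1 - \Phi^L > \epsilon$ pointwise on $D_L$, giving
\begin{equation*}
\epsilon\,\mu(D_L) \le \int_{D_L}(1-\Phi^L)\,d\mu \le \int_X (1-\Phi^L)\,d\mu .
\end{equation*}
A short computation using that $\phi_j^L$ is $\beta^L$-measurable shows $\int_X \Phi^L\,d\mu = \sum_j \|\phi_j^L\|_2^2$. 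It remains to prove this quantity tends to $1$ as $L$ exhausts $G$, and --- crucially --- that it does so monotonically.

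Monotonicity is the key structural point, since it is what lets me produce a single $M$ valid for all larger $L$ rather than merely a convergent sequence. If $M \subset L$ then $\beta^L$ refines $\beta^M$, so $\sigma(\beta^M) \subset \sigma(\beta^L)$; as conditional expectation onto the larger $\sigma$-algebra is $L^2$-orthogonal projection onto a larger subspace, $\|\phi_j^M\|_2 \le \|\phi_j^L\|_2$, whence $\int(1-\Phi^L)\,d\mu$ is nonincreasing in $L$. (Equivalently one may invoke the monotonicity of conditional entropy from Lemma \ref{lem:relative} together with a pointwise entropy bound on bad atoms.) For convergence I fix an increasing sequence $L_1 \subset L_2 \subset \cdots$ with $\bigcup_k L_k = G$; because $\beta$ is generating, the $\sigma$-algebras $\sigma(\beta^{L_k})$ increase to the full $\sigma$-algebra, so the upward martingale convergence theorem gives $\phi_j^{L_k} \to \chi_{A_j}$ in $L^2$ and hence $\sum_j\|\phi_j^{L_k}\|_2^2 \to \sum_j \mu(A_j) = 1$. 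This is the only place the generating hypothesis enters. Choosing $k$ with $\int(1-\Phi^{L_k})\,d\mu < \epsilon^2$ and setting $M = L_k$, monotonicity yields $\int(1-\Phi^{L})\,d\mu < \epsilon^2$ for every finite $L \supset M$, so $\mu(D_L) < \epsilon$.

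The main obstacle, then, is not any single estimate but the interplay between the monotone convergence (needed to secure a uniform threshold simultaneously over \emph{all} $L \supset M$) and the pointwise bound $1-\Phi^L>\epsilon$ on bad atoms; once these are in place, the remaining work is the bookkeeping of ordering the atoms of $\beta^L$ and defining $f$.
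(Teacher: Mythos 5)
Your argument is correct, but it is genuinely different from the paper's. The paper routes everything through Lemma \ref{lem:approx1}: it produces a partition $\alpha'=\{A'_1,\ldots,A'_n\}$ coarsening $\beta^M$ with $\mu(A_j\Delta A'_j)<\delta<(\epsilon/n)^2$, defines $f(i)$ combinatorially as the index $j$ with $\mu(B^L_i - A'_j)=0$ (well-defined since $\beta^L$ refines $\alpha'$ for every $L\supset M$), and bounds the bad set by a Chebyshev-type estimate: each bad atom satisfies $\mu(B^L_i)<\delta^{-1/2}\mu(B^L_i-A_{f(i)})$ and $B^L_i-A_{f(i)}\subset A'_{f(i)}-A_{f(i)}$, so the bad atoms have total mass at most $n\sqrt{\delta}<\epsilon$. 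You instead work with the conditional expectations $\phi^L_j=\mu(A_j\mid\beta^L)$ and the functional $\sum_j\|\phi^L_j\|_2^2$, obtaining the uniformity over all $L\supset M$ from the fact that conditional expectation onto a finer $\sigma$-algebra is projection onto a larger subspace, and the convergence to $1$ from the martingale convergence theorem applied to an exhausting sequence; the generating hypothesis enters only there. Both proofs exploit the same structural point --- that $\beta^L$ refines $\beta^M$ forces the relevant quantity to only improve --- but yours is functional-analytic and self-contained (it does not need Lemma \ref{lem:approx1}), at the cost of invoking martingale convergence, while the paper's is elementary and reuses machinery already established; your $f$ also differs (argmax of $p_j(B)$ rather than the atom of $\alpha'$ containing $B$), which is harmless. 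The only loose ends in your write-up are trivial and shared with the paper: atoms of measure zero should be discarded before forming the ratios, and if $\epsilon<1$ the bound $\mu(D_L)<\epsilon$ forces at least one good atom, so $r\ge 1$ as the statement requires.
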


\begin{proof}
Let $\delta>0$ be such that $\delta < \Big(\frac{\epsilon}{n}\Big)^2$. By the previous lemma, there exists a partition $\alpha' =\{A'_1,\ldots,A'_n\} \in \sP$ and a finite set $M \subset G$ such that $\alpha' \le \beta^M$ and $\mu(A'_i\Delta A_i)<\delta$ for all $i$. Let $L$ be any finite subset of $G$ with $M \subset L$.


Let $\beta^L=\{B^L_1,\ldots,B^L_{m_L}\}$ and let $f:\{1,\ldots,{m_L}\} \to \{1,\ldots,n\}$ be the function $f(i)=j$ if $\mu(B^L_i -A'_j)=0$. This is well-defined since $\beta^L$ refines $\alpha'$.

After reordering the partition elements of $\beta^L=\{B^L_1,\ldots,B^L_{m_L}\}$ if necessary, we may assume that there is an $r \in \{0,\ldots,{m_L}\}$ such that, if $r>0$ then for all $ i \le r$,
$$\frac{\mu(B^L_i \cap A_{f(i)})}{\mu(B^L_i)} \ge 1-\sqrt{\delta},$$
and if $i>r$ then 
$$\frac{\mu(B^L_i \cap A_{f(i)})}{\mu(B^L_i)} < 1-\sqrt{\delta}.$$
So if $i>r$ then
$$\mu(B^L_i \cap A_{f(i)})< (1-\sqrt{\delta})\mu(B^L_i).$$
So
\begin{eqnarray*}
\mu(B^L_i) &=& \mu(B^L_i - A_{f(i)}) + \mu(B^L_i \cap A_{f(i)})\\ 
        &<& \mu(B^L_i - A_{f(i)})+ (1-\sqrt{\delta})\mu(B^L_i).
\end{eqnarray*}
Solve for $\mu(B^L_i)$ to obtain
\begin{eqnarray*}
\mu(B^L_i) &<& \frac{1}{\sqrt{\delta}} \mu(B^L_i - A_{f(i)}).
\end{eqnarray*}
Since the atoms $B^L_i$ are pairwise disjoint, it follows that
\begin{eqnarray*}
\mu\Big(\bigcup_{i>r} B^L_i\Big) &<& \frac{1}{\sqrt{\delta}} \mu\Big(\bigcup_{i>r} B^L_i - A_{f(i)}\Big).
\end{eqnarray*}
Since $\mu(B^L_i - A'_{f(i)})=0$, it must be that $B^L_i - A_{f(i)} \subset A'_{f(i)} - A_{f(i)}$, up to a set of measure zero. So,
\begin{eqnarray*}
\mu\Big( \bigcup_{i>r} B^L_i \Big) &\le &  \frac{1}{\sqrt{\delta}} \mu\Big(\bigcup_{i>r} A'_{f(i)} - A_{f(i)}\Big)\\
&\le& n\sqrt{\delta}< \epsilon.
\end{eqnarray*}

\end{proof}

\begin{thm}\label{thm:dense}
If $\alpha$ is a generating partition then 
$$\sP_{gen} \subset \overline{ \sP_{eq}(\alpha)}.$$
I.e., the subspace of partitions combinatorially equivalent to $\alpha$ is dense in the space of all generating partitions.
\end{thm}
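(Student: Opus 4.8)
The plan is to fix a target $\beta \in \sP_{gen}$ and $\epsilon > 0$ and construct a single partition $\gamma \in \sP_{eq}(\alpha)$ with $d(\gamma,\beta) < \epsilon$. The two relations defining combinatorial equivalence will be supplied by the two lemmas of this section: the relation $\gamma \le \alpha^L$ comes from Lemma \ref{lem:approx1} (which uses that $\alpha$ is generating), while the relation $\alpha \le \gamma^M$ comes from the preceding lemma (which uses that $\beta$ is generating). The essential point is that the \emph{order} of the construction matters: first fix the window $M$ and the coding that recovers $\alpha$ from $\beta$ on most of $X$, and only afterwards demand that the $\alpha$-measurable approximant of $\beta$ be close enough.

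Concretely, I would proceed as follows. First, apply the preceding lemma with the given $\alpha$ and the generating partition $\beta$ to fix a finite window $M \subset G$ and a labelling $f$ producing a coarsening $\hat\alpha \le \beta^M$ with $\mu(\{\hat\alpha \ne \alpha\})$ as small as desired. Next, apply Lemma \ref{lem:approx1} with the generating partition $\alpha$ to obtain $\beta' \le \alpha^{L_0}$ with $\mu(\{\beta \ne \beta'\})$ small; crucially I choose this last quantity small enough that $|M|\,\mu(\{\beta \ne \beta'\})$ is still small, so that $d(\beta,\beta')$ is small (by the standard estimate bounding $d$ by the symmetric differences of atoms) \emph{and} transporting the labelling $f$ from atoms of $\beta^M$ to atoms of $(\beta')^M$ changes the outcome only on a small set. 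Let $\hat\alpha' \le (\beta')^M \le \alpha^{L_0 M}$ be the resulting $\alpha$-measurable guess for $\alpha$, and set $E := \{x : \hat\alpha'(x) \ne \alpha(x)\}$. The key observation is that $E$ is automatically $\alpha^{L_0 M}$-measurable, since both $\hat\alpha'$ and $\alpha$ are, and that $\mu(E)$ is small because $\mu(E) \le \mu(\{\hat\alpha' \ne \hat\alpha\}) + \mu(\{\hat\alpha \ne \alpha\})$, the first term controlled by the $M$-window of $\{\beta \ne \beta'\}$ and the second by the first step.

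I then define $\eta := \{E \cap A_1, \ldots, E \cap A_n,\ X \setminus E\}$, which is $\alpha^{L_0 M}$-measurable, and set $\gamma := \beta' \vee \eta$. One half of equivalence is immediate: $\gamma \le \alpha^{L_0} \vee \alpha^{L_0 M} = \alpha^{L_0 M \cup L_0}$. For closeness, $d(\gamma,\beta) \le d(\gamma,\beta') + d(\beta',\beta) = H(\eta \mid \beta') + d(\beta',\beta) \le H(\eta) + d(\beta',\beta)$, and $H(\eta)$ is small because $\eta$ is nontrivial only on the small set $E$ (into at most $n$ pieces). For the other half, $\alpha \le \gamma^M$, I note that $E$ is $\gamma$-measurable, hence no atom of $\gamma^M$ straddles $E$ and its complement; on $E$ the refinement $\eta \le \gamma$ records $\alpha$ exactly, while off $E$ one has $\alpha = \hat\alpha'$ with $\hat\alpha' \le (\beta')^M \le \gamma^M$, so each atom of $\gamma^M$ lies in a single atom of $\alpha$. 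Thus $\gamma \in \sP_{eq}(\alpha)$ and $d(\gamma,\beta) < \epsilon$, proving $\sP_{gen} \subset \overline{\sP_{eq}(\alpha)}$.

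The main obstacle, and the reason the naive choice $\gamma = \beta' \vee \alpha$ fails, is that closeness to $\beta$ forces $H(\alpha \mid \gamma) \approx H(\alpha \mid \beta)$, which need not be small, so $\alpha \le \gamma^M$ can only hold with a large window $M$ and must be arranged \emph{exactly} rather than approximately. This exactness is what the construction above is designed to secure: the coding-failure set $E$ is genuinely $\alpha$-measurable, which lets me record $\alpha$ on $E$ through an $\alpha$-measurable refinement without ever approximating $E$ by another set, and the quantifier ordering (fix $M$ before fixing how close $\beta'$ must be) guarantees that $E$ remains small. Verifying the exact measurability claims off $E$ and the bound on $H(\eta)$ are then routine.
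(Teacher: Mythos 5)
Your construction is correct and is essentially the paper's own proof in different notation: your $\beta'$, error set $E$, and correcting partition $\eta$ (so that $\gamma = \beta' \vee \eta$) correspond exactly to the paper's $\gamma=\{C_i\}$, the miscoded set, and the atoms $C'_i$ and $C_{i,j}$ of its partition $\gamma_1$, with the same quantifier ordering (fix the coding window from $\beta^M$ first, then choose the $\alpha$-measurable approximant close enough). The only differences are cosmetic --- your closeness estimate via $H(\eta\mid\beta')$ in the Rokhlin metric versus the paper's atom-by-atom symmetric-difference bound --- so no further comparison is needed.
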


\begin{proof}
Let $\alpha=\{A_1,\ldots,A_n\}$ and $\beta =\{B_1,\ldots,B_m\} \in \sP_{gen}$. Without loss of generality, we assume that $\mu(A_i)>0$ for all $i=1\ldots n$. Let $\epsilon>0$. By the previous lemma, there exists a finite set $L \subset G$ such that the atoms of $\beta^L=\{B^L_1,\ldots,B^L_{m_L}\}$ can be ordered so that there exists an $r\in \{1,\ldots,m_L\}$ and a function $f:\{1,2,\ldots r\} \to \{1,2,\ldots,n\}$ so that for all $i \in \{1,\ldots,r\}$,
$$\frac{\mu(B^L_i \cap A_{f(i)})}{\mu(B^L_i)} \ge 1-\epsilon$$
and 
\begin{eqnarray}\label{eqn:3}
\mu\Big( \bigcup_{i > r} B^L_i\Big) < \epsilon.
\end{eqnarray}
By choosing $\epsilon$ small enough (if necessary) we may assume that $f$ is onto (for example, by choosing $\epsilon$ to be smaller than $\frac{1}{2}\mu(A_j)$ over all $j=1\ldots n$).

By definition of $\beta^L$, $m_L \le m^{|L|}$. If necessary, we may assume that $m_L=m^{|L|}$ after modifying $\beta^L$ by adding to it several copies of the empty set. That is, for some $i$, it may occur that $B^L_i =\emptyset$.

Let $\delta>0$ be such that $\delta<\epsilon$. By lemma \ref{lem:approx1} there exists a partition $\gamma=\{C_1,\ldots,C_m\}$ and a finite set $M\subset G$ such that $\gamma \le \alpha^M$ and $\mu(C_i \Delta B_i) < \delta$ for all $i$. By choosing $\delta$ small enough we may assume the following. Let $\gamma^L=\{C^L_1,\ldots,C^L_{m_L}\}$. Then, after reordering the atoms of $\gamma^L$ if necessary,
\begin{eqnarray}\label{eqn:1}
\mu\Big(\bigcup_{j=1}^{m_L}  C^L_j - B_j^L \Big) \le \epsilon.
\end{eqnarray}

Let 
\begin{eqnarray*}
C'_i &=& \{x \in C_i \, |\, \textrm{ if } x \in C^L_j \textrm{ for some } j \textrm{ then } x \in A_{f(j)}\}\\
&=& \bigcup_{j=1}^{m_L}\, C_i \cap C^L_j \cap A_{f(j)}.
\end{eqnarray*}

Let $C_{i,j} = C_i \cap A_j - C'_i$. Let 
$$\gamma_1=\{C'_i\,|\, i=1\ldots m\} \cup \{C_{i,j}\,|\, 1 \le i,j\le m\}.$$

Note that $\gamma_1 \le (\alpha^M)^L = \alpha^{LM}$ where $LM=\{lm~|~l \in L, m\in M\}$. We claim that $\gamma_1$ is combinatorially equivalent to $\alpha$. Let $\Sigma_1$ be the smallest $G$-invariant collection of subsets of $X$ that is closed under finite intersections and unions and contains the atoms of $\gamma_1$. It suffices to show that every atom of $\alpha$ is in $\Sigma_1$. Observe that, for each $i$,  $C_i=C'_i \cup \bigcup_{j=1}^m C_{i,j}$. Hence, $C_i \in \Sigma_1$. Therefore the atoms of $\gamma^L$ are also in $\Sigma_1$. Since $f$ is onto, the definition of $C'_i$ implies
$$C'_i\cap A_{p} = \cup\{ C'_i \cap C^L_j ~|~ f(j)=p\}.$$
So $C'_i \cap A_{p}$ is in $\Sigma_1$ for all $i,p$. Now $C_i \cap A_p = C_{i,p} \cup (C'_i \cap A_p)$. So $C_i\cap A_p \in \Sigma_1$ for all $i,p$. Since
 $$A_{p} = \bigcup_{i=1}^m C_i \cap A_{p},$$
 $A_p \in \Sigma_1$. Since $p$ is arbitrary, this proves the claim. Thus $\gamma_1 \in \sP_{eq}(\alpha)$.

We claim that $\mu(C'_i \Delta C_i) \le 3\epsilon$ for all $i$.
By definition,
$$C'_i\Delta C_i = C_i - C'_i \subset \bigcup_{j=1}^{m_L} C^L_j - A_{f(j)}.$$
For each $j$,
$$C^L_j - A_{f(j)} \subset (C^L_j - B^L_j) \cup (B^L_j - A_{f(j)}).$$
Thus,
\begin{eqnarray}\label{eqn:0}
C'_i \Delta C_i \subset \bigcup_{j=1}^{m_L}(C^L_j - B^L_j) \cup\bigcup_{j=1}^{r}(B^L_j - A_{f(j)})\cup\bigcup_{j>r}(B^L_j - A_{f(j)}).
\end{eqnarray}
If $j\le r$, then by definition of $r$,
\begin{eqnarray*}
\frac{\mu(B^L_j \cap A_{f(j)})}{\mu(B^L_j)} \ge 1-\epsilon.
\end{eqnarray*}
This implies
\begin{eqnarray*}
\mu(B^L_j - A^L_{f(j)}) \le \epsilon \mu(B^L_j).
\end{eqnarray*}
Thus 
\begin{eqnarray}\label{eqn:2}
\mu\Big(\bigcup_{j=1}^{r} B^L_j - A^L_{f(j)} \Big) \le \sum_j \epsilon \mu(B^L_j) \le \epsilon.
\end{eqnarray}
Equations \ref{eqn:0}, \ref{eqn:1},  \ref{eqn:2} and \ref{eqn:3} imply the claim.

Since $\delta<\epsilon$ and  $\mu(C_i \Delta B_i) < \delta$ for all $i$, the above claim implies that $\mu(C'_i\Delta B_i) \le 4\epsilon$ for all $i$. Thus we have shown that for every $\epsilon>0$, there exists a partition $\gamma_1=\{C'_1,\ldots,C'_m,\ldots\}$, combinatorially equivalent to $\alpha$, containing at most $m+m^2$ partition elements and such that $\mu(C'_i \Delta B_i) < 4\epsilon$ for $i=1\ldots m$. This implies that $\beta$ is in the closure of $\sP_{eq}(\alpha)$. Since $\beta$ is arbitrary this implies the theorem.
\end{proof}

\section{Splittings}\label{sec:splittings}







In this section, $G$ can be any finitely generated group with finite symmetric generating set $S$. Let $(X,\mu)$ be a standard probability space on which $G$ acts by measure-preserving transformations.


\begin{defn}\label{defn:splitting}
Let $\alpha$ be a partition. A {\bf simple splitting} of $\alpha$ is a partition $\sigma$ of the form $\sigma=\alpha \vee s\beta$ where $s\in S$ and $\beta$ is a coarsening of $\alpha$. 

A {\bf splitting} of $\alpha$ is any partition $\sigma$ that can be obtained from $\alpha$ by a sequence of simple splittings. In other words, there exist partitions $\alpha_0, \alpha_1,\ldots,\alpha_m$ such that $\alpha_0=\alpha$, $\alpha_m = \sigma$ and $\alpha_{i+1}$ is a simple splitting of $\alpha_i$ for all $1 \le i < m$.

If $\sigma$ is a splitting of $\alpha$ then $\alpha$ and $\sigma$ are combinatorially equivalent. The splitting concept originated from Williams' work [Wi73] in symbolic dynamics.
\end{defn}

\begin{defn}
The {\bf Cayley graph} $\Gamma$ of $(G,S)$ is defined as follows. The vertex set of $\Gamma$ is $G$. For every $s \in S$ and every $g \in G$ there is a directed edge from $g$ to $gs$ labeled $s$. There are no other edges.

The {\bf induced subgraph} of a subset $F \subset G$ is the largest subgraph of $\Gamma$ with vertex set $F$. A subset $F \subset G$ is {\bf connected} if its induced subgraph in $\Gamma$ is connected.
\end{defn}

\begin{lem}\label{lem:splittings}
If $\alpha, \beta \in \sP$, $\alpha$ refines $\beta$ and $F \subset G$ is finite, connected and contains the identity element $e$ then
$$\alpha \vee \bigvee_{f \in F^{-1}} f\beta$$
is a splitting of $\alpha$.
\end{lem}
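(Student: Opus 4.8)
The plan is to realize the target partition as the final term of a chain of simple splittings obtained by adjoining the translates $f^{-1}\beta$ one at a time, in an order dictated by the connectedness of $F$. First I would fix a convenient enumeration of $F$. Since $F$ is connected and contains $e$, its induced subgraph in $\Gamma$ has a spanning tree rooted at $e$; a breadth-first (or any tree) search from $e$ produces an ordering $f_0 = e, f_1, \ldots, f_k$ of the elements of $F$ with the property that for every $j \ge 1$ there are an index $i < j$ and a generator $s_j \in S$ with $f_j = f_i s_j$. Here I use that an edge of $\Gamma$ between $f_i$ and $f_j$ means $f_j = f_i s$ or $f_i = f_j s$ for some $s \in S$, and that $S$ is symmetric, so in either case $f_j = f_i s_j$ for a suitable $s_j \in S$.

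Next I would define the partial joins
$$\sigma_j := \alpha \vee \bigvee_{l=0}^{j} f_l^{-1}\beta, \qquad j = 0,1,\ldots,k,$$
so that $\sigma_k = \alpha \vee \bigvee_{f\in F^{-1}} f\beta$ is exactly the partition in the statement. The base case is immediate: since $f_0 = e$ and $\alpha$ refines $\beta$, we have $\sigma_0 = \alpha \vee \beta = \alpha$. It then suffices to verify that each $\sigma_j$ is a simple splitting of $\sigma_{j-1}$, for then the chain $\sigma_0 = \alpha, \sigma_1, \ldots, \sigma_k$ exhibits $\sigma_k$ as a splitting of $\alpha$.

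The key observation is the identity $f_j^{-1} = s_j^{-1} f_i^{-1}$, which follows from $f_j = f_i s_j$. Setting $s := s_j^{-1} \in S$, we get
$$\sigma_j = \sigma_{j-1} \vee f_j^{-1}\beta = \sigma_{j-1} \vee s\big(f_i^{-1}\beta\big).$$
Because $i \le j-1$, the partition $f_i^{-1}\beta$ is one of the joinands defining $\sigma_{j-1}$, and hence $\sigma_{j-1}$ refines $f_i^{-1}\beta$; equivalently, $f_i^{-1}\beta$ is a coarsening of $\sigma_{j-1}$. This is precisely the condition in the definition of a simple splitting, with $s \in S$ and coarsening $f_i^{-1}\beta$, so $\sigma_j$ is a simple splitting of $\sigma_{j-1}$, completing the induction.

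The argument is essentially bookkeeping once the right ordering is in hand; the only point requiring care is the passage from ``$F$ connected'' to the enumeration satisfying $f_j = f_i s_j$ with $i < j$, which is where connectedness, together with the presence of $e$ to anchor the root, is genuinely used. I do not anticipate any analytic difficulty, since no measure-theoretic estimates enter: the statement is purely combinatorial and algebraic, describing how joins of translated coarsenings can be assembled step by step.
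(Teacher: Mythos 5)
Your proof is correct and follows essentially the same route as the paper's: both arguments take a spanning tree of the induced subgraph of $F$ rooted at $e$ and adjoin the translates $f^{-1}\beta$ one tree-edge at a time, using the identity $f_j^{-1}\beta = s_j^{-1}(f_i^{-1}\beta)$ together with the fact that $f_i^{-1}\beta$ is already a coarsening of the partial join to recognize each step as a simple splitting. The only cosmetic difference is that the paper runs the induction downward on $|F|$ by deleting a leaf distinct from $e$, whereas you build the chain upward in breadth-first order.
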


\begin{proof}
We prove this by induction on $|F|$. If $|F|=1$ then $F=\{e\}$ and the statement is trivial. Let $f_0 \in F-\{e\}$ be such that $F_1=F-\{f_0\}$ is connected. To see that such an $f_0$ exists, choose a spanning tree for the induced subgraph of $F$. Let $f_0$ be any leaf of this tree that is not equal to $e$. 

By induction, $\alpha_1 := \alpha \vee \bigvee_{f \in F^{-1}_1} f\beta $ is a splitting of $\alpha$. Since $F$ is connected, there exists an element $f_1 \in F_1$ and an element $s_1 \in S$ such that $f_1s_1=f_0$. Since $f_1 \in F_1$, $\alpha_1$ refines $(f^{-1}_1\beta)$. Thus
 $$\alpha \vee \bigvee_{f \in F^{-1}} f\beta = \alpha_1 \vee f_0^{-1}\beta= \alpha_1 \vee s_1^{-1}(f_1^{-1}\beta)$$
 is a splitting of $\alpha$. 
\end{proof}



\begin{prop}\label{prop:ball splittings}
Let $\alpha,\beta$ be two combinatorially equivalent generating partitions. Then there is an $n \ge 0$ such that
$$\alpha^n = \bigvee_{g \in B(e,n)} \, g\alpha$$
is a splitting of $\beta$. Here $B(e,n)$ is the ball of radius $n$ centered at the identity element in $G$ with respect to the word metric induced by $S$. Of course, $\alpha^n$ is also a splitting of $\alpha$.
\end{prop}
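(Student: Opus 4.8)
The plan is to realize $\alpha^n$ as a splitting of $\beta$ by two applications of Lemma~\ref{lem:splittings}, combined with the observation that for $n$ large the partition $\alpha^n$ already refines the intermediate partition $\beta^L$, so that the extra atoms coming from $\beta$ get absorbed.

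First I would fix convenient data. Since $\alpha$ and $\beta$ are combinatorially equivalent there are finite sets $L_0, M \subset G$ with $\alpha \le \beta^{L_0}$ and $\beta \le \alpha^M$. Replacing $L_0$ by a ball $B(e,k) \supseteq L_0$, I may take $L = B(e,k)$: then $L$ is finite, symmetric (so $L^{-1}=L$), connected, contains $e$, and still $\alpha \le \beta^L$. Next I record the two containments I will need. Because $e \in L$ we have $\beta^L \ge \beta$, and by hypothesis $\beta^L \ge \alpha$. On the other hand, applying $l \in L$ to $\beta \le \alpha^M$ gives $l\beta \le \alpha^{lM}$, whence $\beta^L = \bigvee_{l\in L} l\beta \le \alpha^{LM}$. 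Thus if I choose $n$ so large that $LM \subseteq B(e,n)$, then $\alpha^{LM} \le \alpha^n$ and therefore $\alpha^n \ge \beta^L \ge \beta$; in particular $\alpha^n$ refines both $\beta^L$ and $\beta$.

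With this $n$ fixed, the construction has three steps. (1) Taking the fine and coarse partitions in Lemma~\ref{lem:splittings} both equal to $\beta$ and $F = L$ (finite, connected, $e\in L$), and using $L^{-1}=L$ and $e\in L$, the lemma gives that $\beta^L = \beta \vee \bigvee_{f\in L^{-1}} f\beta$ is a splitting of $\beta$. (2) Now $\beta^L$ refines $\alpha$, so I apply Lemma~\ref{lem:splittings} again with fine partition $\beta^L$, coarse partition $\alpha$, and $F = B(e,n)$; since $B(e,n)^{-1}=B(e,n)$ this yields that $\beta^L \vee \alpha^n$ is a splitting of $\beta^L$. (3) Since $\alpha^n \ge \beta^L$ from the previous paragraph, $\beta^L \vee \alpha^n = \alpha^n$. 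Hence $\alpha^n$ is a splitting of $\beta^L$, and because a splitting of a splitting is again a splitting (concatenate the defining sequences of simple splittings), $\alpha^n$ is a splitting of $\beta$.

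The one genuinely non-formal point, and the step I expect to be the crux, is the absorption in step (3): splittings only ever refine, so a naive attempt to ``reach'' $\alpha^n$ from $\beta$ would overshoot to something strictly finer. The quantitative input that saves this is the containment $\beta^L \le \alpha^{LM} \le \alpha^n$, valid once $B(e,n) \supseteq LM$, which forces $\beta^L \vee \alpha^n$ to collapse exactly to $\alpha^n$. Everything else is bookkeeping: arranging $L$ to be a symmetric connected ball so the two invocations of Lemma~\ref{lem:splittings} apply verbatim, and using transitivity of the splitting relation.
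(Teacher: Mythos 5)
Your proof is correct and follows essentially the same route as the paper's: enlarge the witnessing sets to balls, use Lemma~\ref{lem:splittings} to realize $\beta^L$ as a splitting of $\beta$ and then $\beta^L \vee \alpha^n$ as a splitting of $\beta^L$, and observe that once $B(e,n) \supseteq LM$ the join collapses to $\alpha^n$ because $\beta^L \le \alpha^{LM} \le \alpha^n$. The paper phrases the collapse as $\beta^l \vee \alpha^{m+l} = (\beta \vee \alpha^m)^l = \alpha^{m+l}$, but this is the same computation; your write-up just makes the intermediate splitting of $\beta$ into $\beta^L$ explicit.
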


This proposition is a variation of a result that is well-known in the case $G=\Z$ in the context of subshifts of finite-type. For example, see [LM95, theorem 7.1.2, page 218]. It was first proven in [Wi73].

\begin{proof}
Let $L,M \subset G$ be finite sets such that $\alpha \le \beta^L$ and $\beta \le \alpha^M$. Let $l, m \in \N$ be such that $L \subset B(e,l)$ and $M \subset B(e,m)$. So $\alpha \le \beta^l$ and $\beta \le \alpha^m$. Since balls are connected and $\alpha \le \beta^l$, the previous lemma implies $\beta^l \vee \alpha^{m+l}$ is a splitting of $\beta^l$, and therefore, is a splitting of $\beta$. But $\beta^l \vee \alpha^{m+l} = (\beta \vee \alpha^m)^l = \alpha^{m+l}$.
\end{proof}

\begin{thm}\label{thm:invariant}
Let $\Phi:\sP \to \R$ be any continuous function. Suppose that $\Phi$ is monotone decreasing under splittings; i.e., if $\sigma$ is a splitting of $\alpha$ then $\Phi(\sigma) \le \Phi(\alpha)$. Define $\phi:\sP \to \R$ by 
$$\phi(\alpha)=\lim_{n \to \infty} \Phi(\alpha^n)=\inf_n \Phi(\alpha^n).$$

Then, for any two finite generating partitions $\alpha_1$ and $\alpha_2$, $\phi(\alpha_1)=\phi(\alpha_2)$. So we may define $\phi(G,X,\mu)=\phi(\alpha)$ for any finite generating partition $\alpha$. The number $\phi(G,X,\mu)$ is a measure-conjugacy invariant.
\end{thm}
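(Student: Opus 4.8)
The plan is to establish the equality $\phi(\alpha_1)=\phi(\alpha_2)$ in two stages---first for combinatorially equivalent generating partitions, then for arbitrary generating partitions by a density argument---and finally to deduce conjugacy invariance.

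I would begin with two preliminary observations. First, the sequence $n \mapsto \Phi(\alpha^n)$ is non-increasing, so that $\lim_n \Phi(\alpha^n)=\inf_n \Phi(\alpha^n)$ as claimed: since the ball $B(e,n+1)$ is finite, connected, symmetric, and contains $e$, Lemma \ref{lem:splittings} (applied with refining partition $\alpha^n$ and coarsening $\alpha$) shows that $\alpha^{n+1}=\alpha^n \vee \bigvee_{f \in B(e,n+1)} f\alpha$ is a splitting of $\alpha^n$, so monotonicity of $\Phi$ gives $\Phi(\alpha^{n+1}) \le \Phi(\alpha^n)$. Second, $\phi$ is \emph{upper semicontinuous}: each map $\alpha \mapsto \Phi(\alpha^n)$ is continuous, since the join is continuous and the $G$-action is isometric on $(\sP,d)$ while $\Phi$ is continuous, and an infimum of continuous functions is upper semicontinuous. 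This last point is the crux of the whole argument.

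Next I would prove that $\phi$ is constant on combinatorial equivalence classes of generating partitions. Let $\alpha, \beta$ be combinatorially equivalent generating partitions and fix $k \ge 0$. Then $\beta^k$ is combinatorially equivalent to $\beta$, hence to $\alpha$, and is generating, so Proposition \ref{prop:ball splittings} furnishes an $n$ with $\alpha^n$ a splitting of $\beta^k$; monotonicity of $\Phi$ then gives $\phi(\alpha) \le \Phi(\alpha^n) \le \Phi(\beta^k)$. Taking the infimum over $k$ yields $\phi(\alpha) \le \phi(\beta)$, and symmetry gives equality. To pass to arbitrary finite generating partitions $\alpha_1, \alpha_2$, I would invoke Theorem \ref{thm:dense}: since $\alpha_2 \in \sP_{gen} \subset \overline{\sP_{eq}(\alpha_1)}$, there is a sequence $\gamma_k \in \sP_{eq}(\alpha_1)$ with $\gamma_k \to \alpha_2$. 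Each $\gamma_k$ is generating---combinatorial equivalence with the generating partition $\alpha_1$ forces its $G$-invariant $\sigma$-algebra to be the full algebra---so the previous step gives $\phi(\gamma_k)=\phi(\alpha_1)$ for every $k$.

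The main obstacle now surfaces: because $\phi$ is an infimum of continuous functions it is only upper semicontinuous, not continuous, so I cannot simply pass $\phi$ through the limit $\gamma_k \to \alpha_2$. Upper semicontinuity does, however, give $\phi(\alpha_1)=\limsup_k \phi(\gamma_k) \le \phi(\alpha_2)$, which is one inequality; the reverse inequality follows by exchanging the roles of $\alpha_1$ and $\alpha_2$ and using that $\sP_{eq}(\alpha_2)$ is likewise dense in $\sP_{gen}$. Hence $\phi(\alpha_1)=\phi(\alpha_2)$, and $\phi(G,X,\mu):=\phi(\alpha)$ is well-defined. Finally, for conjugacy invariance I would note that a measure-conjugacy $\psi$ between $G$-systems induces a bijection of partition spaces that commutes with the $G$-action and the join, preserves the Rokhlin distance, and carries generating partitions to generating partitions; consequently $\Phi$, and therefore $\phi$, transfer along $\psi$. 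Combining this with the well-definedness just established on each system shows $\phi(G,X,\mu)=\phi(G,Y,\nu)$ whenever the two systems are isomorphic.
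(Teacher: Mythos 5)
Your proof is correct and follows essentially the same route as the paper: constancy of $\phi$ on combinatorial equivalence classes via Proposition \ref{prop:ball splittings}, upper semicontinuity of $\phi$ as an infimum of the continuous functions $\alpha\mapsto\Phi(\alpha^n)$, and the density theorem \ref{thm:dense} to pass between arbitrary generating partitions. The only differences are cosmetic: you argue the equivalence-class step directly rather than by contradiction, and you supply two justifications the paper leaves implicit (that $\Phi(\alpha^n)$ is non-increasing in $n$, and that $\phi$ transfers under a measure conjugacy).
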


\begin{proof}
Let $\alpha$ and $\beta$ be two combinatorially equivalent finite partitions. We claim that $\phi(\alpha)=\phi(\beta)$. To see this, suppose, for a contradiction, that $\phi(\alpha)<\phi(\beta)$. Then there exists an $n\ge 0$ such that $\Phi(\alpha^n) <\phi(\beta)$. But by the previous proposition, there is an $m\ge 0$ such that $\beta^m$ is a splitting of $\alpha^n$ which implies $\Phi(\alpha^n) \ge \Phi(\beta^m)\ge \phi(\beta)$, a contradiction. So $\phi(\alpha)=\phi(\beta)$.

For $n\ge 0$ and $\alpha \in \sP$, let $\Phi_n(\alpha)=\Phi(\alpha^n)$. Since $\Phi$ is continuous and the map $\alpha \mapsto \alpha^n$ is also continuous, it follows that $\Phi_n$ is continuous. Since $\phi(\alpha) = \inf_n \Phi_n(\alpha)$, it follows that $\phi$ is upper semi-continuous, i.e., if $\{\beta_n\}$ is a sequence of partitions converging to $\alpha$ then $\limsup_n \phi(\beta_n) \le \phi(\alpha)$. 

Now let $\alpha, \beta$ be two finite generating partitions. By theorem \ref{thm:dense}, there exist finite partitions $\{\beta_n\}_{n=1}^\infty$ combinatorially equivalent to $\beta$ such that $\{\beta_n\}_{n=1}^\infty$ converges to $\alpha$. So $\phi(\beta) = \limsup_n \phi(\beta_n) \le \phi(\alpha)$. Similarly, $\phi(\alpha)\le \phi(\beta)$. So $\phi(\alpha)=\phi(\beta)$.

\end{proof}


\section{The $f$-invariant}\label{sec:f}

In this section, $G=\langle s_1,\ldots,s_r \rangle$. Let $(X,\mu)$ be a standard probability space on which $G$ acts by measure-preserving transformations and let $S=\{s_1^{\pm 1},\ldots,s_r^{\pm 1}\}$. Note $|S|=2r$. Let $F:\sP \to \R$ be defined as in the introduction.



 
\begin{prop}\label{prop:monotone} 
Let $\alpha \in \sP$. If $\sigma$ is a splitting of $\alpha$ then $F(\sigma)\le F(\alpha)$.
\end{prop}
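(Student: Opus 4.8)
The plan is to reduce the statement to a single \emph{simple} splitting and then prove the resulting inequality by a direct conditional-entropy computation in which the free-group structure is invoked exactly once. By Definition \ref{defn:splitting} every splitting is a finite composition of simple splittings, and the inequality $F(\sigma)\le F(\alpha)$ chains along such a composition; so it suffices to treat the case $\sigma=\alpha\vee t\beta$ with $t\in S$ and $\beta$ a coarsening of $\alpha$. In particular $\beta$ is measurable with respect to $\alpha$, and hence $s\beta$ is measurable with respect to $s\alpha$ for every $s\in G$ — this is the only structural feature of $\beta$ I will use.

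First I would expand both $F(\sigma)$ and $F(\alpha)$ into joint entropies. Writing $\sigma\vee s_i\sigma=(\alpha\vee s_i\alpha)\vee(t\beta\vee s_it\beta)$ and applying the chain rule (Lemma \ref{lem:relative}) to each summand, a short manipulation gives
$$F(\sigma)-F(\alpha)=(1-2r)D+\sum_{i=1}^r E_i,$$
where $D:=H(t\beta|\alpha)=H(\sigma)-H(\alpha)\ge 0$ and $E_i:=H(t\beta\vee s_it\beta\,|\,\alpha\vee s_i\alpha)\ge 0$. Since $1-2r<0$, the proposition is equivalent to the single estimate $\sum_{i=1}^r E_i\le(2r-1)D$.

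Next I would bound each term. Subadditivity of conditional entropy, followed by monotonicity (conditioning on the larger algebra $\alpha\vee s_i\alpha$) and measure-preservation (translating by $s_i^{-1}$), yields
$$E_i\le H(t\beta|\alpha\vee s_i\alpha)+H(s_it\beta|\alpha\vee s_i\alpha)\le H(t\beta|\alpha)+H(s_it\beta|s_i\alpha)=2D$$
for every $i$. This generic bound alone gives only $\sum_i E_i\le 2rD$, which is too weak by exactly $D$, so the heart of the matter is to save a factor $D$ on one summand. Let $j$ be the unique index with $t\in\{s_j,s_j^{-1}\}$. If $t=s_j$, then $t\beta=s_j\beta$ is a coarsening of $s_j\alpha$, hence already measurable with respect to the conditioning $\alpha\vee s_j\alpha$, so that component drops out and $E_j=H(s_j^2\beta|\alpha\vee s_j\alpha)\le H(s_j^2\beta|s_j\alpha)=D$. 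If $t=s_j^{-1}$, then $s_jt\beta=\beta$ is a coarsening of $\alpha$, which again drops out, giving $E_j=H(s_j^{-1}\beta|\alpha\vee s_j\alpha)\le H(s_j^{-1}\beta|\alpha)=D$. Combining $E_i\le 2D$ for $i\ne j$ with $E_j\le D$ gives $\sum_i E_i\le(r-1)2D+D=(2r-1)D$, hence $F(\sigma)-F(\alpha)\le(1-2r)D+(2r-1)D=0$.

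I expect the main obstacle to be bookkeeping rather than any deep inequality: correctly assembling the decomposition $F(\sigma)-F(\alpha)=(1-2r)D+\sum_i E_i$, and, above all, recognizing that precisely one summand improves from $2D$ to $D$ because in the free group the pair of positions $\{e,t\}$ carrying $\sigma$ overlaps the pair $\{s_j,s_jt\}$ carrying $s_j\sigma$ — via the cancellation $s_js_j^{-1}=e$ when $t=s_j^{-1}$, or the coincidence of the position $s_j$ when $t=s_j$. The remaining care is simply in keeping the directions of the monotonicity inequalities straight and in invoking measure-preservation to translate conditional entropies by group elements.
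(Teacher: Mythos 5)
Your proof is correct and follows essentially the same route as the paper's: reduce to a simple splitting $\sigma=\alpha\vee t\beta$, expand $F(\sigma)-F(\alpha)$ via the chain rule into $(1-2r)H(t\beta|\alpha)$ plus conditional-entropy terms, bound those terms by $H(t\beta|\alpha)$ using subadditivity, monotonicity and measure-preservation, and save exactly one of the $2r$ contributions because $t\beta$ (or $s_jt\beta$) is already measurable with respect to the conditioning algebra. The paper packages this slightly more compactly by symmetrizing the sum over all of $S=\{s_1^{\pm1},\dots,s_r^{\pm1}\}$ so that the single vanishing term is simply $H(\sigma|\alpha\vee t\alpha)=0$, which avoids your case split on $t=s_j$ versus $t=s_j^{-1}$, but the underlying mechanism is identical.
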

\begin{proof}
By induction, it suffices to prove the proposition in the special case in which $\sigma$ is a simple splitting of $\alpha$. So let $\sigma=\alpha \vee t\beta$ for some $t\in S$ and coarsening $\beta$ of $\alpha$. For any $s \in S$, 
\begin{eqnarray*}
H(\sigma \vee s\sigma)&=&H(\alpha \vee s\alpha) + H(\sigma \vee s\sigma|\alpha \vee s\alpha)\\
&=& H(\alpha \vee s\alpha) + H(s\sigma|\alpha \vee s\alpha) +H(\sigma|\alpha \vee s\alpha \vee s\sigma)\\
&\le &H(\alpha \vee s\alpha) + H(\sigma|\alpha \vee s^{-1}\alpha) +H(\sigma|\alpha \vee s\alpha).
\end{eqnarray*}
The last inequality occurs because $\mu$ is $G$-invariant, so $H(s\sigma|\alpha \vee s\alpha)=H(\sigma|\alpha \vee s^{-1}\alpha)$.

Since $H(\sigma)=H(\alpha) + H(\sigma|\alpha)$, the above implies
\begin{eqnarray*}
F(\sigma) &\le& (1-2r)\big(H(\alpha) + H(\sigma|\alpha)\big) + \sum_{i=1}^r H(\alpha \vee s\alpha) + H(\sigma|\alpha \vee s^{-1}\alpha) +H(\sigma|\alpha \vee s\alpha)\\
&=& F(\alpha) + (1-2r)H(\sigma|\alpha) + \sum_{s\in S} H(\sigma|\alpha \vee s\alpha).
\end{eqnarray*}
Since $\sigma \le \alpha \vee t\alpha$, $H(\sigma | \alpha \vee t\alpha)=0$. Hence
\begin{eqnarray*}
F(\sigma) -F(\alpha) &\le& (1-2r)H(\sigma|\alpha) + \sum_{s\in S-\{t\}} H(\sigma|\alpha \vee s\alpha)\\
&=& \sum_{s\in S-\{t\}}\Big( H(\sigma|\alpha \vee s\alpha) - H(\sigma|\alpha) \Big) \le 0.
\end{eqnarray*}

\end{proof}


Theorem \ref{thm:main} now follows from the proposition above and theorem \ref{thm:invariant}.

\begin{defn}
Let $K$ be a finite set and $\kappa$ a probability measure on $K$. Let $K^G$ be the product space with the product measure $\kappa^G$. The system $(G,K^G,\kappa^G)$ is called the {\bf Bernoulli shift} over $G$ with base measure $\kappa$.

Let $A_k= \{x \in K^G\, |\, x(e)=k\}$ where $e$ denotes the identity element in $G$. Then $\alpha=\{A_k~|~k\in K\}$ is the {\bf Bernoulli partition} associated to $K$. It is generating and $H(\kappa)=H(\alpha)$, by definition.
\end{defn}

\begin{thm}\label{thm:bernoulli}
Let $G=\langle s_1,\ldots,s_r \rangle$ be the free group of rank $r$. Let $K$ be a finite set and $\kappa$ a probability measure on $K$. Then
$$f(G,K^G,\kappa^G) = H(\kappa).$$
\end{thm}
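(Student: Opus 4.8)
The plan is to evaluate the $f$-invariant directly on the Bernoulli partition $\alpha = \{A_k \mid k \in K\}$, where $A_k = \{x \in K^G \mid x(e) = k\}$. Since $\alpha$ is generating with $H(\alpha) = H(\kappa) < \infty$, Theorem \ref{thm:main} guarantees that $f(G, K^G, \kappa^G) = f(\alpha) = \inf_n F(\alpha^n)$, so it suffices to compute $F(\alpha^n)$ for each $n$. I would first unwind the group action to see that for $g \in G$ the translate $g\alpha$ is precisely the partition of $K^G$ according to the value of the coordinate $x(g)$; indeed $gA_k = \{y \mid y(g) = k\}$. Consequently, for any finite $F \subset G$, the partition $\alpha^F = \bigvee_{g\in F} g\alpha$ records the tuple $(x(g))_{g \in F}$.

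The key dynamical input is independence. Under the product measure $\kappa^G$ the coordinates $(x(g))_{g \in G}$ are independent with common law $\kappa$, so entropy is additive over $\alpha^F$: $H(\alpha^F) = \sum_{g \in F} H(g\alpha) = |F|\,H(\kappa)$. Applying this with $F = B(e,n)$ gives $H(\alpha^n) = |B(e,n)|\,H(\kappa)$. Since $s_i\alpha^n = \bigvee_{g \in s_iB(e,n)} g\alpha$, we also have $\alpha^n \vee s_i\alpha^n = \alpha^{B(e,n)\cup s_iB(e,n)}$, whence $H(\alpha^n \vee s_i\alpha^n) = |B(e,n) \cup s_iB(e,n)|\,H(\kappa)$. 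Plugging these into the definition of $F$ and using that, by symmetry of the Cayley tree, $|B(e,n) \cup s_iB(e,n)|$ is independent of $i$, I obtain
\[
F(\alpha^n) = H(\kappa)\Big[(1-2r)\,|B(e,n)| + r\,|B(e,n)\cup s_1B(e,n)|\Big].
\]

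It then remains to count in the tree. The ball satisfies $|B(e,n)| = 1 + \sum_{k=1}^n 2r(2r-1)^{k-1}$. For the union, deleting the edge joining $e$ to $s_1$ splits the tree into the side containing $e$ and the side containing $s_1$; a vertex on the $e$-side lies in $B(e,n)\cup s_1B(e,n)$ exactly when it is within distance $n$ of $e$ inside that subtree (where $e$ has only $2r-1$ neighbors), giving $1 + \sum_{k=1}^n (2r-1)^k$ vertices, and by symmetry the $s_1$-side contributes the same, so $|B(e,n)\cup s_1B(e,n)| = 2\big(1 + \sum_{k=1}^n (2r-1)^k\big)$. Substituting both counts, the terms proportional to $\sum_{k=1}^n(2r-1)^k$ cancel because $\tfrac{2r(1-2r)}{2r-1} + 2r = 0$, and the bracket collapses to the constant $1$ for every $n$. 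Hence $F(\alpha^n) = H(\kappa)$ for all $n$ and $f(\alpha) = \inf_n F(\alpha^n) = H(\kappa)$.

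The computation contains no serious obstacle beyond bookkeeping; the one substantive point is verifying this exact cancellation, which is precisely the phenomenon that forces the coefficient $(1-2r)$ in the definition of $F$ and explains why $F(\alpha^n)$ is constant in $n$ (so the infimum is attained immediately) rather than merely convergent. The only technical care needed is in justifying the additivity $H(\alpha^F) = |F|\,H(\kappa)$ from independence, which is standard given the product structure of $\kappa^G$.
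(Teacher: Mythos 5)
Your proposal is correct and follows essentially the same route as the paper: evaluate $F(\alpha^n)$ on the Bernoulli partition, use independence of the coordinates to get $H(\alpha^F)=|F|\,H(\kappa)$, and then count $|B(e,n)|$ and $|B(e,n)\cup sB(e,n)|$ in the Cayley tree to see that $F(\alpha^n)=H(\kappa)$ identically. The only (cosmetic) difference is that your summation formulas for the ball sizes are valid uniformly in $r\ge 1$, whereas the paper's closed forms require $r>1$ and treat $r=1$ as a separate easy case.
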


\begin{proof}
Let $\alpha$ be the Bernoulli partition associated to $K$. Let $g_1,\ldots,g_n$ be $n$ distinct elements of $G$. It follows from the Bernoulli condition that the collection $\{g_i\alpha\}_{i=1}^n$ of partitions is independent. This means that if $j:\{1,\ldots,n\} \to K$ is any function then
$$\kappa^G\Big( \bigcap_{i=1}^n ~g_iA_{j(i)} \Big) = \prod_{i=1}^n ~\kappa^G(A_{j(i)}).$$
It is well-known that this implies 
$$H\Big( \bigvee_{i=1}^n g_i\alpha \Big) = \sum_{i=1}^n H(g_i\alpha) = nH(\alpha).$$
See, for example, [Gl03, prop. 14.19, page 257]. So for any $k\ge 1$,
\begin{eqnarray*}
F(\alpha^k) &=& \Big(\frac{1}{2}\sum_{s \in S} H(\alpha^k \vee s\alpha^k)\Big) - (|S|-1)H(\alpha^k)\\
&=& \Big(\frac{1}{2} \sum_{s \in S} |B(e,k) \cup B(s,k)|H(\alpha)\Big) - (|S|-1)|B(e,k)|H(\alpha).
\end{eqnarray*}
Suppose $r >1$. Then, since $G=\langle s_1,\ldots,s_r \rangle$ is free, it is a short exercise to compute:
$$|B(e,k)| = 1 + |S|\frac{(|S|-1)^k -1}{|S|-2},$$
$$|B(e,k)\cup B(s,k)| = 2\frac{(|S|-1)^{k+1} -1}{|S|-2}$$
for all $s\in S$. Thus,
\begin{eqnarray*}
F(\alpha^k) &=& H(\alpha)\Big(|S|\frac{(|S|-1)^{k+1} -1}{|S|-2} - (|S|-1) - (|S|-1) |S|\frac{(|S|-1)^k -1}{|S|-2}\Big)\\
& =& H(\alpha).
\end{eqnarray*}
If $r=1$ then $|B(e,k)|=2k+1$ and $|B(e,k)\cup B(s,k)| = 2k+2$. So it follows in a similar way that $F(\alpha^k)=H(\alpha)$. So $f(G,X,\mu)=\lim_{k \to\infty} F(\alpha^k)=H(\alpha)=H(\kappa)$. 
\end{proof}

\begin{proof}[Proof of theorem \ref{thm:free}]
According to Stepin's theorem [St75], if $(K_1,\kappa_1), (K_2,\kappa_2)$ are standard Borel probability spaces with $H(\kappa_1)=H(\kappa_2)$ then $(G,K_1^G,\kappa_1^G)$ is measurably conjugate to $(G,K_2^G,\kappa_2^G)$. 

Now suppose $(K_1,\kappa_1)$, $(K_2,\kappa_2)$ are Borel probability spaces such that $(G,K_1^G,\kappa_1^G)$ is measurably conjugate to $(G,K_2^G,\kappa_2^G)$. Let $(L_1,\lambda_1), (L_2,\lambda_2)$ be probability spaces with $|L_1| + |L_2|<\infty$ and $H(\lambda_i) = H(\kappa_i)$ for $i=1,2$. By Stepin's theorem, $(G,L_i^G,\lambda_i^G)$ is measurably conjugate to $(G,K_i^G,\kappa_i^G)$. By the above theorem, $f(G,L_i^G,\lambda_i^G)=H(\lambda_i)$. Since $f$ is a measure-conjugacy invariant, $H(\kappa_1)=H(\kappa_2)$.
\end{proof}

\end{document}